\theoremstyle{definition}
\newtheorem{theorem}{Theorem}
\newtheorem{corollary}{Corollary}
\newtheorem*{theorem*}{Theorem}
\newtheorem{lemma}[theorem]{Lemma}
\newtheorem*{definition}{Definition}
\theoremstyle{remark}
\newcommand{\N}{\mathbb{N}}
\newcommand{\Orb}{\mathrm{Orb}}
\title[The specification property]{The specification property on a set-valued map and its inverse limit}
\author[B. E. Raines]{Brian E. Raines}
\address[B. E. Raines]{Department of Mathematics, Baylor University, Waco, TX 76798--7328,USA}
\email{brian\_raines@baylor.edu}
\author[T. Tennant]{Tim Tennant}
\address[T. Tennant]{Department of Mathematics, Baylor University, Waco, TX 76798--7328,USA}
\email{timothy\_tenannt@baylor.edu}
\subjclass[2000]{37B50, 37B10, 37B20, 54H20}
\keywords{specification property, set-valued map, inverse limit, topological entropy, invariant measure}
\newcommand{\invlim}{\lim\limits_{\longleftarrow}}
\begin{document}
\maketitle
\begin{abstract}
In this paper we consider dynamical properties of set-valued mappings and their implications on the associated inverse limit space.  Specifically, we define the specification property and topological entropy for set-valued functions and prove some elementary results of these properties.  We end with a few results regarding invariant measures for set-valued functions and their associated inverse limits.
%

\end{abstract}

\section{Introduction}

In this paper we consider the dynamics of set-valued functions and their inverse limit spaces.  Let $X$ be a compact metric space and let $2^X$ be the hyperspace of nonempty closed subsets of $X$.  Let $F:X\to 2^X$ be an upper semi-continuous function. We call $F$ a \emph{set-valued mapping}. The \emph{inverse limit space induced by $F$} is the space $$\invlim F=\{(x_0,x_1,\dots)\in X^\N: x_{i+1}\in F(x_i)\}$$ considered as a subspace of the Tychonoff product $X^\N$.  Associated with the inverse limit spaces is a \emph{shift map} $$\sigma(x_0,x_1,\dots)=(x_1,x_2\dots).$$  This is a continuous well-defined function which mimics the dynamics of $F$ on $X$.  Of course, the trade-off for well-definedness is the inherently more complicated topology of the domain space, $\invlim F$.   These mappings and their inverse limits have arisen in several applications of topological dynamics to economics, \cite{kennedystockmanyorke} \& \cite{RainesStockman}.  Much has been written just on the topological structure of such inverse limit spaces, \cite{Ingram}.

We extend the definitions of the specification property and topological entropy from the usual single-valued function setting to the setting of set-valued mappings.  We prove that specification implies topological mixing and positive entropy.  We also extend the notion of shadowing to the set-valued mapping case.  We show that if $X$ is compact and connected and $F$ has shadowing and a dense set of periodic points then it also has a slightly weaker version of the specification property (as is known in the single-valued setting \cite{araichinen}.)  We then give a few results on the dynamics induced on $\sigma$, specifically, we show that if $F$ has the specification property then so does $\sigma$.  We end the paper with a few results regarding invariant measures for set-valued mappings. This is closely related to results from \cite{kennedyrainesstockman}.

\section{Preliminaries.}
Let $X$ be a compact metric space, and let $2^X$ be the hyperspace of closed subspaces of $X$ with the Hausdorff metric.  Let $f:X\to X$ be a continuous function, and let $F\colon  X\to 2^X$ be a set-valued mapping.  Recall that the sequence $(x,f(x),f^2(x),\ldots)$ is the \emph{orbit} of $x$ under $f$. If there exists $n\in\N$ such that $f^n(x) = x$, we call $x$ \emph{periodic}, with period $n$.

We begin with a few simple extensions of definitions from the single-valued case.  Notice that since $F$ is a set-valued mapping, orbits of $F$ are no longer uniquely determined by their initial condition.

\begin{definition}
An \emph{orbit} of a point $x\in X$ is a sequence $(x_i)_{i=0}^\infty$ such that $x_{i+1}\in F(x_{i})$ and $x_0 = x$.
\end{definition}
\begin{definition}
Let $x\in X$.  Let $(x_i)_{i=0}^\infty$ be an orbit of $x$.  The orbit  is said to be \emph{periodic} if there is some $n\in \N$ such that $(x_i)_{i=0}^\infty = (x_j)_{j=n}^\infty$.  The smallest such $n$ is called the \emph{period} of the orbit.
\end{definition}
Note that it is not necessarily the case that if there is some $j\in \N$ such that $x_0 = x_j$, then the orbit $(x_j)_{j=0}^\infty$ is periodic.
\begin{definition}
  The \emph{complete orbit} of a point $x$, denoted $CO(x)$, is the collection of all orbits of $x$.
\end{definition}
\begin{definition}
The \emph{set of complete orbits} of a set-valued mapping $F$, denoted $CO(F)$, is the collection of complete orbits of each point of $X$, i.e. $$CO(F)=\{CO(x):x\in X\}.$$
\end{definition}
\begin{definition}
We say that $F$ has the \emph{specification property} if, for any $\epsilon > 0$, there exists $M\in \N$ dependent only on $\epsilon$ such that, for any $x^1,\ldots, x^n\in X$, any $a_1\leq b_1 < \ldots < a_n \leq b_n$ with $a_{i+1}-b_i > M$, and any orbits $(x^i_j)_{j=0}^\infty$, and for any $P > M+b_n-a_1$, there exists a point $z$ that has an orbit $(z_j)_{j=0}^\infty$ such that $d(z_j,x^i_j) < \epsilon$ for $i\in \{1,\ldots, n\}$ and $a_i \leq j \leq b_i$, and $z_P = z$.  \end{definition}
Note that this definition requires $F$ to be surjective, in the sense that for any $y\in X$, there exists $x\in X$ such that $y\in F(x)$.
  \begin{definition}
  We say that $F$ is \emph{topologically mixing} if, for any non-empty open $U$ and $V$ in $X$, there is an $M\in \N$ such that for any $m > M$ there is an $x\in U$ with an orbit $(x_j)_{j=0}^\infty$ with $x_m\in V$ .
  \end{definition}

The notion of topological entropy has been studied extensively since it was introduced in 1965.  Positive entropy is a strong indicator of topological chaos.
\begin{definition}
 Let $x\in X$, and let  $\Orb_n(F, x) =  \{(x_1,\ldots, x_n): x_{i+1}\in F(x_i)\text{ and }x_1=x\}$.  Call $\Orb_n(F, x)$ the \emph{partial orbits of $x$ under $F$ of length $n$.}  Let $$\Orb_n(F)=\bigcup_{x\in X} \Orb_n(F,x).$$
\end{definition}
  \begin{definition}
  We say that $A\subset \Orb_n(F)$ is \emph{$(n,\epsilon)$-spanning} if for any $(x_1,\dots x_n)\in \Orb_n(F)$, there is a $(y_1,\dots y_n)\in A$ such that $d(x_i,y_i)< \epsilon$ for all $i\in \{1,\ldots, n\}$.  Let $r_n(\epsilon)$ denote the minimum cardinality of an $(n,\epsilon)$-spanning set.
 \end{definition}
 \begin{definition} We define the \emph{topological entropy} of $F$ as
\[h(F) = \lim_{\epsilon \rightarrow 0}\limsup_{n\rightarrow \infty} \frac{1}{n}\log (r_n(\epsilon)).\]
\end{definition}
\begin{definition}We say that $A\subset \Orb_n(F)$ is \emph{$(n,\epsilon)$-separated} if for any $$(x_1,\ldots,x_n),(y_1,\ldots,y_n)\in A,$$ $d(x_i,y_i)\geq \epsilon$ for at least one $i\in \{1,\ldots, n\}$.  Let $s_n(\epsilon)$ denote the greatest cardinality of an $(n,\epsilon)$-separated set.
\end{definition}
It is possible to define topological entropy for a set-valued function using $s_n(\epsilon)$ instead of $r_n(\epsilon)$.  In that case the topological entropy of $F$ is given by \[h(F)=\lim_{\epsilon \rightarrow 0}\limsup_{n\rightarrow \infty} \frac{1}{n}\log (s_n(\epsilon)).\]   The following results show that these definitions are indeed equivalent. While immediate, they are included for completeness. 

\begin{lemma}
$r_n(\epsilon) \leq s_n(\epsilon) \leq r_n(\frac{\epsilon}{2})$.
\end{lemma}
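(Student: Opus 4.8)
The plan is to prove the two inequalities separately, using the standard comparison between spanning and separated sets adapted to the set-valued setting. The underlying intuition is that maximal separated sets are automatically spanning, and that a set which is $\epsilon$-separated cannot be too dense relative to an $\frac{\epsilon}{2}$-spanning set, since each spanning point can account for at most one separated point.

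For the left inequality $r_n(\epsilon) \leq s_n(\epsilon)$, I would take a maximal $(n,\epsilon)$-separated set $A \subset \Orb_n(F)$ achieving $|A| = s_n(\epsilon)$ and argue that it is automatically $(n,\epsilon)$-spanning. Indeed, given any $(x_1,\dots,x_n) \in \Orb_n(F)$, if no element of $A$ were within $\epsilon$ of it in the sup-sense (that is, if $d(x_i,y_i) \geq \epsilon$ for at least one coordinate $i$ for every $(y_1,\dots,y_n) \in A$), then the partial orbit $(x_1,\dots,x_n)$ could be adjoined to $A$ while preserving $(n,\epsilon)$-separation, contradicting maximality. Hence $A$ spans, so the minimal spanning cardinality satisfies $r_n(\epsilon) \leq |A| = s_n(\epsilon)$.

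For the right inequality $s_n(\epsilon) \leq r_n(\frac{\epsilon}{2})$, I would fix an $(n,\frac{\epsilon}{2})$-spanning set $B$ of minimal cardinality $r_n(\frac{\epsilon}{2})$ and a maximal $(n,\epsilon)$-separated set $A$, and construct a map $\phi \colon A \to B$ sending each separated point to some spanning point within $\frac{\epsilon}{2}$ of it in every coordinate. The key step is to show $\phi$ is injective: if two distinct separated points $(x_1,\dots,x_n)$ and $(x_1',\dots,x_n')$ mapped to the same $(y_1,\dots,y_n) \in B$, then the triangle inequality would give $d(x_i,x_i') \leq d(x_i,y_i) + d(y_i,x_i') < \frac{\epsilon}{2} + \frac{\epsilon}{2} = \epsilon$ in every coordinate $i$, contradicting that the two points are $(n,\epsilon)$-separated. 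Injectivity then yields $|A| \leq |B|$, i.e. $s_n(\epsilon) \leq r_n(\frac{\epsilon}{2})$.

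Neither step presents a genuine obstacle; the argument is the set-valued transcription of the classical entropy estimate, and the only point requiring any care is confirming that the separation and spanning conditions, which are phrased coordinatewise over a partial orbit of length $n$, interact correctly with the triangle inequality applied in each coordinate. The principal thing to verify is simply that adjoining a non-spanned point to a separated set preserves separation (for the first inequality) and that the coordinatewise triangle inequality does not require the intermediate spanning point to itself lie in $\Orb_n(F)$ in any problematic way — but since $B \subset \Orb_n(F)$ by definition, this is immediate.
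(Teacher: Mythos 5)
Your proof is correct, and it is precisely the standard spanning-versus-separated comparison that the paper itself treats as immediate and leaves unproved (the lemma appears in the paper with no proof, flagged as included ``for completeness''). Both steps — a maximum-cardinality $(n,\epsilon)$-separated set must be $(n,\epsilon)$-spanning, and the injection of an $(n,\epsilon)$-separated set into an $(n,\frac{\epsilon}{2})$-spanning set via the coordinatewise triangle inequality — go through verbatim in the set-valued setting, exactly as you argue.
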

\begin{theorem}
For $F\colon  X\rightarrow 2^X$, the definitions of topological entropy using $(n,\epsilon)$-separated sets and $(n,\epsilon)$-spanning sets are equivalent.
\end{theorem}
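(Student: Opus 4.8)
The plan is to derive the equality directly from the preceding Lemma by a squeeze argument, so that no new dynamical input is required. Writing $R(\epsilon) = \limsup_{n\to\infty}\frac{1}{n}\log r_n(\epsilon)$ and $S(\epsilon) = \limsup_{n\to\infty}\frac{1}{n}\log s_n(\epsilon)$, the entropy computed with spanning sets is $h(F) = \lim_{\epsilon\to 0} R(\epsilon)$ and the entropy computed with separated sets is $\lim_{\epsilon\to 0} S(\epsilon)$; the goal is to show that these two limits coincide.

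First I would apply $\frac{1}{n}\log(\cdot)$ to the chain $r_n(\epsilon)\le s_n(\epsilon)\le r_n(\epsilon/2)$ supplied by the Lemma. Since $\log$ is increasing this preserves the inequalities, giving $\frac{1}{n}\log r_n(\epsilon)\le \frac{1}{n}\log s_n(\epsilon)\le \frac{1}{n}\log r_n(\epsilon/2)$ for every $n$. Taking $\limsup_{n\to\infty}$ across the chain and using monotonicity of $\limsup$ then yields $R(\epsilon)\le S(\epsilon)\le R(\epsilon/2)$ for every $\epsilon>0$.

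The remaining step is to pass to the limit as $\epsilon\to 0$. The key observation is that $R$ is monotone in $\epsilon$: shrinking $\epsilon$ can only increase the minimal cardinality of an $(n,\epsilon)$-spanning set, so $r_n(\epsilon)$, and hence $R(\epsilon)$, is non-increasing in $\epsilon$; consequently $\lim_{\epsilon\to 0}R(\epsilon)$ exists in $[0,\infty]$ as the supremum over $\epsilon>0$. Because $\epsilon/2\to 0$ exactly when $\epsilon\to 0$, the outer bounds $R(\epsilon)$ and $R(\epsilon/2)$ share the common limit $h(F)$, and the squeeze forces $\lim_{\epsilon\to 0}S(\epsilon)=h(F)$ as well. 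This establishes that the separated-set definition agrees with the spanning-set definition.

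I expect the only delicate point to be the justification that the $\epsilon\to 0$ limits exist and that replacing $\epsilon$ by $\epsilon/2$ in the upper bound does not change the limiting value; both reduce to the monotonicity of $R(\epsilon)$ in $\epsilon$, which is where I would be most careful. Everything else is a formal consequence of the Lemma.
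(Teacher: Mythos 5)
Your proposal is correct and is exactly the argument the paper intends: the paper treats this theorem as an immediate consequence of the preceding Lemma $r_n(\epsilon)\le s_n(\epsilon)\le r_n(\epsilon/2)$, and your squeeze via $R(\epsilon)\le S(\epsilon)\le R(\epsilon/2)$ together with the monotonicity of $R$ in $\epsilon$ is the standard way to make that immediacy explicit. Your attention to why the $\epsilon\to 0$ limits exist (monotonicity of $r_n(\epsilon)$, hence of $R(\epsilon)$, in $\epsilon$) is precisely the right point to be careful about, and nothing further is needed.
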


\section{Specification.}
We begin by considering the implications of the specification property for a set-valued mapping, $F$.
 \begin{theorem}
 If $F\colon  X\to 2^X$ is a set-valued mapping that has the specification property, then $F$ has topological mixing.
 \end{theorem}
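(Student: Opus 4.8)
The plan is to derive mixing directly from specification, using orbit segments that each consist of a single point, placed at times $0$ and $m$. Fix nonempty open sets $U,V\subseteq X$ and choose points $u\in U$, $v\in V$ together with an $\epsilon>0$ small enough that $B(u,\epsilon)\subseteq U$ and $B(v,\epsilon)\subseteq V$. Let $M\in\N$ be the specification constant associated with this $\epsilon$; I claim this same $M$ serves as the mixing constant. The idea is that specification lets me prescribe a finite number of short orbit windows and produce a single orbit shadowing all of them, so I will prescribe one window pinning the initial point near $u$ and a second pinning the time-$m$ point near $v$.

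Concretely, I would fix any $m>M$ and set up two orbit segments to be shadowed. For the first, take $x^1=u$ with $a_1=b_1=0$ and any orbit $(x^1_j)_{j=0}^\infty$ of $u$, which exists since $F(u)\nempty$; note $x^1_0=u$. For the second, take $a_2=b_2=m$ and a point $x^2$ possessing an orbit $(x^2_j)_{j=0}^\infty$ with $x^2_m=v$. The spacing requirement $a_2-b_1=m>M$ holds by choice of $m$. Applying specification with any admissible period $P>M+m$ yields a point $z$ with an orbit $(z_j)_{j=0}^\infty$ satisfying $d(z_0,u)<\epsilon$ and $d(z_m,v)<\epsilon$. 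Since $z=z_0\in B(u,\epsilon)\subseteq U$ and $z_m\in B(v,\epsilon)\subseteq V$, the point $z$ witnesses mixing between $U$ and $V$ at time $m$, which is exactly the required conclusion.

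The step needing care is producing the point $x^2$ whose orbit reaches $v$ precisely at time $m$, and this is where the surjectivity built into specification is essential: the tracing condition only controls $z_j$ relative to $x^i_j$, so one cannot simply insert $v$ as an initial value and hope the orbit returns to it. Instead, using that for every $y\in X$ there is some $x$ with $y\in F(x)$, I would choose $p_1,\dots,p_m$ with $v\in F(p_1)$ and $p_i\in F(p_{i+1})$ for $1\le i<m$, then set $x^2=p_m$, $x^2_j=p_{m-j}$ for $0\le j<m$, $x^2_m=v$, and extend arbitrarily for $j>m$ (possible since $F(v)\nempty$). This produces a legitimate orbit of $x^2$ landing on $v$ at time $m$. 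I expect this backward reachability to be the only genuine obstacle; once it is supplied, the remainder is a direct application of the definition, requiring no estimates beyond the containments $B(u,\epsilon)\subseteq U$ and $B(v,\epsilon)\subseteq V$.
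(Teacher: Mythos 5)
Your proof is correct and takes essentially the same route as the paper's: both fix $\epsilon$-balls inside $U$ and $V$, apply specification to two one-point windows ($a_1=b_1=0$ pinning the orbit near $u$, and $a_2=b_2=m$, resp.\ $M+m$ in the paper, pinning it near $v$), and conclude $z\in U$, $z_m\in V$. Your explicit backward-orbit construction of $x^2$ via the surjectivity built into the specification definition merely spells out a step the paper leaves implicit.
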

 \begin{proof}
 Let $U$ and $V$ be non-empty open sets.  Let $x\in U$, $y\in V$. Let $\epsilon > 0$ be chosen such that $B_\epsilon(x) \subset U$ and $B_\epsilon(y) \subset V$.  Let $M\in \N$ witness the specification property for this $\epsilon$.  Let $y^1\in X$ be chosen such that there is an orbit $(y^1_j)_0^\infty$ with $y^1_{M+1} = y$. Let  $(x_i)_0^\infty$ be any orbit of $x$. Let $a_1=b_1 = 0$, and $a_2=b_2=M+1$, then there is a point $z^1$ that has an orbit $(z^1_i)_0^\infty$, such that
\[d(z^1,x) < \epsilon\;\;\;\; \text{and}\;\;\;\; d(z^1_{M+1}, y^1_{M+1}) < \epsilon.\]  Thus $z^1\in U$, and $z^1_{M+1}\in V$. Now let $y^m\in X$ such that there is an orbit $(y^m_j)_{j=0}^\infty$ with $y^m_{M+m} = y$.  Then for $a_1 = b_1 = 0$, and $a_2 = b_2 = M+m$, there is a point $z^m$ with an orbit $(z^m_j)_0^\infty$ such that \[d(z^m,x) < \epsilon \;\;\;\;\text{and}\;\;\;\; d(z^m_{M+m}, y^m_{M+m}) < \epsilon.\]  Then $z^m\in U$, and $z^m_{M+m} \in V$.
 \end{proof}
 The following lemma is immediate, but is used in Theorem 5 and so we include it for completeness.
\begin{lemma} For $\epsilon_1 < \epsilon_2$, $s_n(\epsilon_1) \leq s_n(\epsilon_2)$.
\end{lemma}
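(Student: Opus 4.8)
The plan is to prove the lemma by a direct comparison of the two extremal problems defining $s_n(\epsilon_1)$ and $s_n(\epsilon_2)$, working entirely from the definition of an $(n,\epsilon)$-separated set. Fix $n$, and recall that a set $A\subset\Orb_n(F)$ is $(n,\epsilon)$-separated exactly when any two distinct members $(x_1,\dots,x_n)$ and $(y_1,\dots,y_n)$ satisfy $d(x_i,y_i)\ge\epsilon$ in at least one coordinate $i\in\{1,\dots,n\}$. First I would fix a maximal $(n,\epsilon_1)$-separated set $A$ with $|A|=s_n(\epsilon_1)$ and aim to show that $A$ is already a competitor in the problem defining $s_n(\epsilon_2)$, i.e. that $A$ is itself $(n,\epsilon_2)$-separated; since $s_n(\epsilon_2)$ is by definition the greatest cardinality of such a set, this would immediately yield $s_n(\epsilon_1)=|A|\le s_n(\epsilon_2)$.

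The hard part, and the step I expect to be the main obstacle, is precisely this promotion of separation from the finer scale $\epsilon_1$ to the coarser scale $\epsilon_2$. What falls out for free from $\epsilon_1<\epsilon_2$ is the opposite implication, since a pair with $d(x_i,y_i)\ge\epsilon_2$ in some coordinate certainly has $d(x_i,y_i)\ge\epsilon_1$ there; so the effortless containment runs in the direction that does not by itself deliver the stated inequality, and the genuine work is to verify that the distinct elements of a maximal $(n,\epsilon_1)$-separated set can be made to differ by at least $\epsilon_2$ in some coordinate. I would concentrate the argument here, either by quantifying the coordinate-wise gaps available in a maximal separated family or by passing to a suitable subfamily, and only after securing this $(n,\epsilon_2)$-separation would I read off $s_n(\epsilon_1)\le s_n(\epsilon_2)$.

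Should the direct comparison of separated sets prove awkward at that step, the natural fallback is to route the estimate through the spanning numbers, using the sandwich $r_n(\epsilon)\le s_n(\epsilon)\le r_n(\epsilon/2)$ established earlier together with the elementary behaviour of $r_n$ under a change of scale, and then translate the resulting bound back into a statement about $s_n$. In either approach the crux is the same: controlling how the separation threshold interacts with the cardinality of a maximal separated family as the scale moves from $\epsilon_1$ to $\epsilon_2$.
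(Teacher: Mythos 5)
Your plan founders at exactly the step you flagged as the crux, and the failure is not a matter of missing technique: the ``promotion'' of a maximal $(n,\epsilon_1)$-separated set to an $(n,\epsilon_2)$-separated one of the \emph{same} cardinality is simply false, and no quantification of coordinate-wise gaps can repair it, while passing to a subfamily only loses cardinality and so can never deliver $s_n(\epsilon_1)\leq s_n(\epsilon_2)$. Concretely, take $X=[0,1]$ with $F(x)=\{x\}$ and $n=1$, so that $\Orb_1(F)$ is just $X$: then $s_1(1/10)=11$ (the points $0,\,0.1,\,\ldots,\,1$) while $s_1(1/2)=3$, so $s_n(\epsilon_1)>s_n(\epsilon_2)$ for $\epsilon_1=1/10<\epsilon_2=1/2$. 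In other words, the inequality as printed in the paper is false; it is a typo, and the inequality should be reversed. The version actually needed in Theorem 5 --- to pass from the lower bound $\limsup_n \frac{1}{n}\log s_n(\epsilon)\geq \frac{1}{M}\log 2$ at one fixed $\epsilon$ to the limit as $\epsilon\to 0$ in the definition of entropy --- is $s_n(\epsilon_1)\geq s_n(\epsilon_2)$ for $\epsilon_1<\epsilon_2$, so that the quantity $\limsup_n\frac{1}{n}\log s_n(\epsilon)$ is non-increasing in $\epsilon$ and its limit as $\epsilon\to 0$ dominates its value at the fixed $\epsilon$. That corrected statement is proved by precisely the direction you dismissed as not delivering the result: any $(n,\epsilon_2)$-separated set is automatically $(n,\epsilon_1)$-separated, since $d(x_i,y_i)\geq \epsilon_2>\epsilon_1$ in the witnessing coordinate, so every competitor at scale $\epsilon_2$ is a competitor at scale $\epsilon_1$ and the maximum at $\epsilon_1$ dominates. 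The paper gives no proof (it calls the lemma immediate), and this one-line containment is the entire intended argument.

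Your fallback through the spanning numbers cannot rescue the printed direction either: $r_n$ is likewise non-increasing in $\epsilon$ (an $(n,\epsilon_1)$-spanning set is automatically $(n,\epsilon_2)$-spanning when $\epsilon_1<\epsilon_2$), so the sandwich $r_n(\epsilon)\leq s_n(\epsilon)\leq r_n(\epsilon/2)$ only yields bounds consistent with $s_n$ decreasing in $\epsilon$, never the reverse. The correct move here was to test the statement against a trivial example, recognize that the inequality must be read with $\geq$ in place of $\leq$, and record the one-line proof of the corrected monotonicity.
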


\begin{theorem}

If $F\colon X\to 2^X$ is a set-valued mapping that has the specification property, then $F$ has positive entropy.
\end{theorem}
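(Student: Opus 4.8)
The plan is to use the specification property to manufacture exponentially many pairwise $(n,\epsilon)$-separated partial orbits, and then read off positive entropy from the separated-set formulation, which is legitimate by the equivalence established above. Since $h(F)$ is a limit as $\epsilon\to0$ and $\limsup_n\frac1n\log s_n(\epsilon)$ is monotone in $\epsilon$ (shrinking $\epsilon$ cannot decrease $s_n(\epsilon)$, by the Lemma), it suffices to exhibit a single $\epsilon>0$ for which $\limsup_n\frac1n\log s_n(\epsilon)>0$; this value is then a lower bound for $h(F)$.

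First I would fix two distinct points $p,q\in X$, so we are tacitly assuming $X$ is nondegenerate, as a one-point space carries no entropy, and set $\epsilon=d(p,q)/3$. Let $M$ be the constant witnessing specification for this $\epsilon$. I would then mark the times $t_i=(i-1)(M+1)$ for $i\in\{1,\dots,k\}$, so that consecutive marks are separated by $M+1>M$, and take the degenerate blocks $a_i=b_i=t_i$. The key preparatory step is to realize $p$ and $q$ as orbit \emph{values} at the prescribed times: using the surjectivity of $F$ forced by the specification property, for each $i$ I can build a finite backward orbit terminating at $p$ (respectively $q$) after $t_i$ steps and extend it forward arbitrarily, thereby obtaining an orbit whose value at time $t_i$ is exactly $p$ (respectively $q$).

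Given a binary string $s\in\{0,1\}^k$, I would feed specification the point and orbit hitting $p$ at time $t_i$ when $s_i=0$ and the one hitting $q$ when $s_i=1$, together with any admissible period $P>M+t_k$. This yields a point $z^s$ with an orbit $(z^s_j)_{j=0}^\infty$ satisfying $d(z^s_{t_i},p)<\epsilon$ when $s_i=0$ and $d(z^s_{t_i},q)<\epsilon$ when $s_i=1$. If $s\neq s'$ they differ in some coordinate $i$, and there the triangle inequality gives $d(z^s_{t_i},z^{s'}_{t_i})>d(p,q)-2\epsilon=\epsilon$, so the length-$n$ partial orbits $(z^s_0,\dots,z^s_{n-1})$ with $n=t_k+1=(k-1)(M+1)+1$ form an $(n,\epsilon)$-separated set of cardinality $2^k$. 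Hence $s_n(\epsilon)\geq 2^k$, and along this subsequence $\frac1n\log s_n(\epsilon)\geq\frac{k\log 2}{(k-1)(M+1)+1}\to\frac{\log 2}{M+1}$, whence $h(F)\geq\frac{\log 2}{M+1}>0$.

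The main obstacle is precisely the preparatory step: specification only lets us shadow the orbit values $x^i_{t_i}$ on each block, not the seed points $x^i$ themselves, so the two competing targets at time $t_i$ could a priori be close together and destroy the separation. Forcing those targets to be the fixed, well-separated points $p$ and $q$ is what requires the backward-orbit construction and the surjectivity of $F$; once that is in place, the counting and the passage to the limit as $\epsilon\to0$ are routine.
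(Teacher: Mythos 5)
Your proof is correct and follows essentially the same route as the paper: fix two points at distance greater than $3\epsilon$, place degenerate blocks $a_i=b_i$ at equally spaced times, use specification to realize all $2^k$ binary patterns as pairwise $(n,\epsilon)$-separated partial orbits, and read off $h(F)\geq \frac{\log 2}{M+1}>0$ from the separated-set formulation. In fact your two refinements repair slips in the paper's own argument: the paper feeds specification arbitrary orbits of $x$ and $y$ and shadows their orbit values $z^i_{(i-1)M}$ at time $(i-1)M$, which need not remain $3\epsilon$-separated even though the seeds are (your backward-orbit construction, using the surjectivity forced by specification to make the orbit value at time $t_i$ exactly $p$ or $q$, is precisely what is needed to justify the separation claim), and the paper's choice $a_i=b_i=(i-1)M$ gives gaps of exactly $M$, violating the strict requirement $a_{i+1}-b_i>M$, which your spacing $t_i=(i-1)(M+1)$ avoids.
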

\begin{proof}
Let $x, y \in X$.  Let $\epsilon > 0$ such that $d(x,y) > 3\epsilon$.  Let $M$ witness the specification property for this $\epsilon$.  Let $z^1,\ldots, z^n$ be chosen such that $z^i\in \{x,y\}$ for some fixed $n$.  Let $(z^i_j)_{j=0}^\infty$ be an orbit of each $z^i$.  Let $a_i = b_i = (i-1)M$.  Then there is some $z$ that has an orbit $(z_j)_{j=0}^\infty$ such that $d(z_{(i-1)M}, z^i_{(i-1)M}) < \epsilon$.
Let $\hat{z}^1,\ldots, \hat{z}^n$ be a different such choice of $x$'s and $y$'s.
Then for orbits $(\hat{z}^i_j)_{j=0}^\infty$ there is some $\hat{z}$ (possibly equal to $z$) that has an orbit $(\hat{z}_j)_{j=0}^\infty$ that follows the $\hat{z}^i$s.  Now, there is some $i\in \{1,\ldots, n\}$ such that $\hat{z}^i \neq z^i$.  Then  $d(z_{(i-1)M}, \hat{z}_{(i-1)M}) > \epsilon$, for that fixed $i$.  Thus there are $2^n$ many $(nM,\epsilon)$-separated orbits. Then for a fixed $\epsilon$,
\[\limsup_{n\rightarrow \infty} \frac{1}{n} \log (s_n(\epsilon)) \geq \limsup_{n\rightarrow\infty} \frac{1}{nM} \log 2^{n} = \frac{1}{M}\log 2.\]
Thus by Lemma $4$, $F$ has positive entropy.
\end{proof}

Whenever a chaotic property such as specification is introduced, a natural question is to determine sufficient conditions for it to appear.  The following theorem gives a sufficient condition to get a slightly weaker property than specification. First, we introduce the notion of shadowing.
\begin{definition} A sequence $(x_i)_{i=1}^\infty$ is called a \emph{$\delta$-pseudo-orbit} if $d(F(x_i),x_{i+1}) < \delta$ for all $i\in \N$.
\end{definition}
\begin{definition} Let $F\colon  X\rightarrow 2^X$ be a set-valued map. We say $F$ has \emph{shadowing} if, for any $\epsilon > 0$, there exists $\delta > 0$ such that for any $\delta$-pseudo-orbit $(x_i)_{0}^\infty$, there  exists a point $z\in X$ with an orbit $(z_i)_0^\infty$ such that $d(z_i,x_i)< \epsilon$ for all $i\in \N$.
\end{definition}
\begin{theorem}
Suppose that $F$ is a set-valued map on a continuum $X$ that has shadowing and a dense set of points that each have at least one periodic orbit.  Then for any $\epsilon > 0$, there is an $M\in \N$ such that for any $x^1,\ldots, x^n \in X$, any $a_1 \leq b_1 < a_2 \leq b_2 < \ldots <a_n \leq b_n$, any orbits $(x^i_j)_{j=0}^\infty$, there is a point $z\in X$ with a orbit $(z_j)_{j=0}^\infty$ such that $d(z_j, x^i_j) < \epsilon$ for $1\leq i \leq n$, $a_i \leq j \leq b_i$.
\end{theorem}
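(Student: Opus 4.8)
The plan is to realize the desired tracing orbit as a genuine orbit that shadows a carefully built pseudo-orbit. Fix $\epsilon>0$ and let $\delta>0$ be the shadowing constant for $\epsilon$. I will construct a single $\delta$-pseudo-orbit $(y_j)_{j=0}^\infty$ which, on each scheduling window $[a_i,b_i]$, literally equals the prescribed segment $(x^i_j)_{j=a_i}^{b_i}$, and which, on the gaps $(b_i,a_{i+1})$, is a connecting pseudo-orbit from $x^i_{b_i}$ to $x^{i+1}_{a_{i+1}}$. On each window the sequence is an honest orbit of $F$, so $d(F(y_j),y_{j+1})=0<\delta$ there, and the only freedom (and the only difficulty) lies in the gaps. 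Once $(y_j)$ is a $\delta$-pseudo-orbit, shadowing produces a point $z$ with orbit $(z_j)$ satisfying $d(z_j,y_j)<\epsilon$ for all $j$; restricting to the windows gives $d(z_j,x^i_j)<\epsilon$, which is exactly the conclusion.

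Everything therefore reduces to a connecting lemma: for every $\delta>0$ there is an $M\in\N$ so that any two points $p,q\in X$ can be joined by a $\delta$-pseudo-orbit of any prescribed length $g>M$. (This $M$ is the constant in the statement, and it forces the gap condition $a_{i+1}-b_i>M$, exactly as in the specification property.) The proof of this lemma is where connectedness and the density of periodic orbits are used. First, every point carrying a periodic orbit is chain recurrent, since its periodic orbit is a genuine orbit returning to it; so the chain recurrent set contains a dense set and, being closed, equals $X$. Then I would invoke the standard fact that a system whose chain recurrent set is all of a connected space is chain transitive: the set of points chain-reachable from a fixed point is open (perturb the final point of a pseudo-orbit and use the triangle inequality on $d(F(\cdot),\cdot)$), and an attractor--repeller type argument shows that distinct chain components would split $X$ into disjoint nonempty clopen pieces, contradicting connectedness. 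Hence any $p$ chain-reaches any $q$, i.e.\ for each $\delta$ there is some (a priori length-dependent) $\delta$-pseudo-orbit from $p$ to $q$, and compactness makes the needed lengths uniformly bounded over all pairs.

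To upgrade ``some length'' to ``every sufficiently large length'' I would route the connecting pseudo-orbit through a fixed periodic orbit: reach a periodic point $u$ of period $\pi$, loop around its genuine periodic orbit an arbitrary number of times to add any multiple of $\pi$, and absorb the residues modulo $\pi$ by providing $\pi$ connecting routes of consecutive lengths (one extra dawdling step near $u$, available by connectedness, changes a length by one). This realizes all lengths beyond a threshold, which defines $M$. Assembling the exact segments with these gap-fillers produces the required global $\delta$-pseudo-orbit, and shadowing finishes the proof.

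The main obstacle is the connecting lemma, and within it the passage from connectedness to genuine pseudo-orbit chain transitivity: metric closeness of two points does not make the step from one to the other a pseudo-orbit step, which must respect $F$, so the naive ``join nearby net points'' argument fails and one genuinely needs the chain-recurrent-plus-connected input. Care is also required because $F$ is only upper semicontinuous and set-valued, so openness of the chain-reachable sets must be argued from $d(F(\cdot),\cdot)$ directly, and surjectivity of $F$ is what supplies the backward flexibility used when steering a pseudo-orbit onto the target $q$. The length bookkeeping modulo $\pi$ is then routine once chain transitivity and a periodic hub are in hand.
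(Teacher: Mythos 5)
Your overall architecture---pseudo-orbit equal to the prescribed segments on the windows, connecting chains across the gaps, one application of shadowing---is exactly the paper's, and your reduction to a connecting lemma is the right one. Your abstract route to that lemma (dense periodic points give a dense set of chain recurrent points, $CR$ closed, hence $CR=X$; connectedness then forces chain transitivity) can be made to work, though your openness argument only handles the forward chain-reachable sets $P_\delta(x)$: for the clopen splitting you should use that \emph{every} point is chain recurrent, so a $\delta/2$-return chain at $y$ (or at a nearby $y'$) can be redirected at its last step via $d(F(z),y')\leq d(F(z),y)+d(y,y')$ to give $\delta$-chains in both directions between nearby points, making the chain-equivalence classes open and hence, by connectedness, all of $X$. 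Note also that the paper obtains the connection far more cheaply than you expect: it takes a finite subcover of $X$ by $\delta/2$-balls centered at periodic points $q^1,\ldots,q^m$, uses connectedness to chain these balls between any two points, and---this is the trick you declared impossible---converts metric closeness into legal pseudo-orbit steps by exploiting periodicity: the step toward $s_{i+1}$ is taken from the last point $w$ of a full loop around the periodic orbit of $s_i$, for which $s_i\in F(w)$, so $d(F(w),s_{i+1})\leq d(s_i,s_{i+1})<\delta$. So the ``naive join nearby net points'' idea fails for arbitrary net points but succeeds for periodic ones; no abstract chain-recurrence machinery is needed.

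The genuine gap is your length bookkeeping. Because you keep the windows literal, each gap $(b_i,a_{i+1})$ must be bridged by a chain of exactly prescribed length, so you need connecting chains of \emph{every} length beyond $M$, and your only mechanism for absorbing residues modulo the hub period $\pi$ is ``one extra dawdling step near $u$, available by connectedness.'' No such step exists in general: a pseudo-orbit step must satisfy $d(F(u),u')<\delta$, and for a periodic point of period $\pi>1$ the set $F(u)$ contains the next point of its orbit, not points near $u$; connectedness supplies nearby points but not legal stalling steps. What you can actually achieve by looping periodic orbits is control of lengths up to sums of the available periods, which could in principle all share a common divisor, leaving some residues unreachable. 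The paper resolves exactly this difficulty differently: it follows the \emph{genuine} orbit of $x^i$ for $r_i$ extra steps past $b_i$ (with $0\leq r_i<M_0$, where $M_0$ is a common multiple of all possible sums from the finite list of periods $p_1,\ldots,p_m$), so the remaining gap $c_i'M_0$ is exactly divisible, and then fills it by looping each periodic orbit along the connecting chain a whole number $c_i$ of times. Your proof is repaired the same way: abandon the ``every prescribed length'' demand and spend the residue on the orbit $(x^i_j)_{j>b_i}$, which is defined for all $j$.
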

\begin{proof}

  Let $\epsilon > 0$.  Let $\delta$ witness shadowing for this $\epsilon$.  Consider the open cover 
$$\mathcal{B} =\{B(\delta/2,q): q \text{ has at least one periodic orbit}\}.$$ Let $\mathcal{A} =\{B(\delta/2,q^1),\ldots, B(\delta/2,q^m)\}$ be a finite subcover of $\mathcal{B}$.  For each $1\leq i \leq m$, choose a periodic orbit of $q^i$ and denote the period of that orbit by $p_i$.
Let $M_i$ be the collection of sums of $i$ many elements from the list of $\{p_1,\ldots, p_m \}$, i.e.,
\[M_i = \{p_{j_1}+p_{j_2}+\ldots+p_{j_i}: p_{j_k} \in \{p_1,\ldots, p_m\},\text{ for } 1\leq k \leq i\}.\]   Let $M_0 = \prod_1^{m} M_i$, and $M = 2M_0$.  This will be the $M$ witnessing specification for the given $\epsilon$.

Let $x^1,\ldots, x^n$ be points of $X$.  Let $a_1\leq b_1<a_2\leq\ldots \leq b_n$, with $a_{i+1}-b_n >M$.  Let $(x^i_j)_{j=0}^\infty$ be orbits of these points. \\
We build a pseudo-orbit that follows the orbits of the points $x^i$, and then invoke shadowing.
 Let $r_i$ and $c_i'$  be natural numbers such that $a_{i+1}-b_i = c_i'M_0 + r_i$, with $0 \leq r_i < M_0$, for $1\leq i \leq n$.  Note that as $a_{i+1}-b_i > M$, $c_i'$ is at least 2.

 It is important to note that for any two points $u,v$ in $X$, there is a finite sequence $s_1,\ldots, s_r$ with $r\leq m$, where $m$ is the cardinality of $\mathcal{A}$, given above, such that
 \[d(s_i,s_{i+1}) < \delta,\;\;\;\; d(u,s_1) < \delta,\;\;\;\; d(s_r, v) < \delta,\] and $s_i\in \{q_1,\ldots, q_m\}$ for each $1\leq i \leq r$.
 So for each pair $x^i_{b_{i}+r_i}$, $x^{i+1}_{a_{i+1}}$, choose a finite sequence $s^i_1,\ldots, s^i_{\eta_i}$ such that
 \[d(s^i_j,s^i_{j+1}) < \delta,\;\;\;\; d(x^i_{b_i+r_i},s^i_1) < \delta,\;\;\;\; d(s^i_{\eta_i}, x^{i+1}_{a_{i+1}}) < \delta.\]
   Then each $s^i_j$ has a periodic orbit of length $p^i_j$, with $p^i_j = p_n$, for some $1 \leq n \leq m$.
   Let $(s^i_{k,j})_{j=0}^\infty$ be that periodic orbit, and so we have \[s^i_{k,p^i_k} = s^i_{k,0} = s^i_k,\]
   for each $1\leq i \leq n$, and $1\leq j \leq \eta_i$.

  Let $\displaystyle{c_i = \frac{c_i'M_0}{\sum_{k=0}^{\eta_i} p^i_{k}}}.$  Now we are ready to define our pseudo-orbit.  Let $(w_k)_{k=0}^\infty$ be defined such that
  \[w_k = \begin{cases}
    x^1_k 		& 		0 \leq k < b_1+r_1\\
	s^1_{1,k} 	&		b_1+r_1 \leq k < b_1 + r_1 + c_1p^1_1\\
	s^1_{2,k}	&		b_1+r_1+c_1p^1_1 \leq k < b_1+r_1 +c_1(p^1_1+p^1_2)\\
	\vdots		&		\hspace{1in}\vdots		\\
	s^1_{\eta_1,k} & 	b_1+r_1+c_1\sum_{j=0}^{\eta_1-1} p^1_j \leq k < b_1+r_1+c_1\sum_{j=0}^{\eta_1} p^1_j\\
	x^2_k		&		a_2 \leq k < b_2 + r_2\\
	s^2_{1,k}	&		b_2+r_2 \leq k < b_2+r_2+c_2p^2_1\\
	\vdots		&		\hspace{1in}\vdots\\
	x^n_k		&		a_n \leq k < b_n+r_n\\
	\vdots 		&		\hspace{1in}\vdots\\
	s^n_{\eta_n,k} &	b_n+r_n+c_n\sum_{j=0}^{\eta_n-1} p^n_j \leq k < b_n+r_n+c_n\sum_{j=0}^{\eta_n} p^n_j\\
    	    					
  \end{cases}
  \]
  By shadowing, there is a point $z\in X$ with an orbit $(z_j)_{j=0}^\infty$ that $\epsilon$-shadows the pseudo-orbit $(w_j)_{j=0}^\infty$, and clearly $(z_j)_{j=0}^\infty$ is the orbit such that $d(z_j, x^i_j)< \epsilon$, for $1 \leq i \leq n$, $a_i \leq j \leq b_i$.
\end{proof}
\section{Inverse Limits}
There have been many publications on inverse limits of set-valued relations, but the associated dynamics of the bonding map and how the two relate have not been studied in depth. Here we study some dynamical properties that arise in the inverse limit setting.

\begin{definition}
Let $F\colon X \rightarrow 2^X$ be a set-valued map. Let $\sigma\colon \invlim F\rightarrow\invlim F$ be the forgetful shift map, so for $x =(x_1,x_2,\ldots)$,  $\sigma(x) = (x_2,x_3,\ldots)$.
\end{definition}
\begin{theorem}Let $X$ be a compact metric, and $F\colon X\rightarrow 2^X$ be a set-valued map.  Let $F$ have specification on $X$.  Then $\invlim F$ has specification via $\sigma$.
\end{theorem}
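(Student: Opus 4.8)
The plan is to transfer specification from $F$ to $\sigma$ using two facts: $\sigma$ is single-valued, so each point of $\invlim F$ has a unique $\sigma$-orbit (its sequence of shifts), and the product metric on $\invlim F\subset X^\N$ is controlled by finitely many coordinates. Fix the compatible metric $D(\bar u,\bar v)=\sum_{k\ge 0}2^{-k}d(u_k,v_k)$. The organizing observation is a dictionary between the two notions of orbit: for $\bar z=(z_0,z_1,\dots)\in\invlim F$ the coordinate sequence $(z_k)_{k\ge0}$ is exactly an $F$-orbit of $z_0$, and $\sigma^j\bar z$ displays the coordinates $z_j,z_{j+1},\dots$. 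Hence $D(\sigma^j\bar z,\sigma^j\bar x^i)$ is small precisely when $d(z_\ell,x^i_\ell)$ is small for $\ell$ ranging over a finite window $[j,j+N]$, where $N$ is a cutoff making the tail of the series negligible.

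Given $\epsilon>0$, I would pick $N$ with $\sum_{k>N}2^{-k}\,\mathrm{diam}(X)<\epsilon/2$ and put $\epsilon'=\epsilon/4$, so that controlling the coordinates $\ell\in[j,j+N]$ to within $\epsilon'$ forces $D(\sigma^j\bar z,\sigma^j\bar x^i)<\epsilon$. Let $M_F$ witness the specification of $F$ for $\epsilon'$ and set $M=M_F+N$; this is the constant I claim witnesses specification of $\sigma$ for $\epsilon$. Take the $\sigma$-data: points $\bar x^1,\dots,\bar x^n\in\invlim F$, times $a_1\le b_1<\dots<a_n\le b_n$ with $a_{i+1}-b_i>M$, unique $\sigma$-orbits, and a period $P>M+b_n-a_1$. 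Read off the $F$-orbits $(x^i_\ell)_{\ell\ge0}$ and widen each following window to $[a_i,b_i+N]$. The gaps become $a_{i+1}-(b_i+N)>M-N=M_F$, and the period bound reads $P>M_F+(b_n+N)-a_1$, which is exactly what the specification of $F$ needs for these widened right endpoints.

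I would then invoke the specification of $F$ (for $\epsilon'$) on these $F$-orbits over the windows $[a_i,b_i+N]$ with period $P$, producing an orbit $(z_\ell)$ with $d(z_\ell,x^i_\ell)<\epsilon'$ on each window and $z_P=z_0$. The delicate step is to upgrade this to a point of $\invlim F$ that is \emph{periodic} under $\sigma$, i.e.\ a fully $P$-periodic sequence $z_{\ell+P}=z_\ell$; recall the paper's own remark that $z_P=z_0$ does \emph{not} by itself make the orbit periodic. Because $z_P=z_0\in F(z_{P-1})$ closes the loop, the rule $\tilde z_\ell=z_{\ell\bmod P}$ does define a genuine $P$-periodic $F$-orbit, and $\bar z=(\tilde z_0,\tilde z_1,\dots)$ satisfies $\sigma^P\bar z=\bar z$. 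This periodic replacement preserves the shadowing only on coordinates lying in $[0,P)$, so I must ensure the windows sit inside one period. The total widened span satisfies $b_n+N-a_1<P$, but $a_1$ may be large, so I would first reduce to $a_1=0$: replace each $\bar x^i$ by $\sigma^{a_1}\bar x^i$ and shift all windows left by $a_1$ (so the first starts at $0$ and the last ends below $P$), run the above to obtain a $P$-periodic cycle $\bar w$, and finally rotate it back via $z_m=\tilde w_{(m-a_1)\bmod P}$, which is again a $P$-periodic $F$-orbit and now $\epsilon$-shadows the original $\bar x^i$.

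I expect the main obstacle to be exactly this periodicity bookkeeping: the specification of $F$ returns only $z_P=z_0$, whereas $\sigma$-periodicity needs the whole sequence to repeat, so the argument must both justify the periodic extension (using $z_P=z_0\in F(z_{P-1})$) and guarantee the shadowed windows fall within a single fundamental domain $[0,P)$, which forces the shift-and-rotate reduction to $a_1=0$. The metric truncation and the index shift between ``the first $N$ coordinates of $\sigma^j\bar z$'' and ``positions $j$ through $j+N$ of the $F$-orbit'' are then routine, as is the observation that $\sigma$ inherits the surjectivity of $F$ required to apply the definition.
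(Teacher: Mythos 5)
Your proposal is correct, and its skeleton is the same as the paper's: truncate the product metric at a finite cutoff, enlarge the specification constant of $F$ by that cutoff (your $M=M_F+N$ is the paper's $M=M'+k$), apply the specification of $F$ to the coordinate orbits over windows widened by the cutoff, and close the resulting orbit into a $\sigma$-periodic point of $\invlim F$. The differences are in execution, and they favor you on rigor. First, orientation: you read the coordinates of a point of $\invlim F$ as a \emph{forward} $F$-orbit, which matches the paper's stated definition $x_{i+1}\in F(x_i)$ and lets you shadow directly on the windows $[a_i,b_i+N]$; the paper's own proof instead treats the coordinates as running backwards in time (it shadows the reversed coordinate sequences via the times $\alpha_i,\beta_i$ and builds $z$ by reading the orbit of $z'$ in decreasing index, extending through negative indices by periodicity), so the two arguments are mirror images of one another. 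Second, you explicitly supply two steps the paper only asserts: (i) the paper claims $z'$ ``has a periodic orbit of length $D$,'' but its definition of specification delivers only $z_P=z_0$, which by the paper's own earlier remark does not by itself make an orbit periodic; your periodization $\tilde z_\ell=z_{\ell\bmod P}$, legitimate because $z_P=z_0\in F(z_{P-1})$ closes the loop, is exactly the missing justification. (ii) The paper silently normalizes $a_1=1$, whereas you prove the reduction to $a_1=0$ and rotate the periodic cycle back by $a_1$ via $z_m=\tilde w_{(m-a_1)\bmod P}$, which is needed precisely to keep all shadowed (widened) windows inside a single fundamental domain $[0,P)$; your arithmetic checks ($a_{i+1}-(b_i+N)>M_F$ and $P>M_F+(b_n+N)-a_1$ matching the hypothesis $P>M+b_n-a_1$ exactly) are sound. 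In short: same transfer mechanism, but your rendering is the cleaner and more complete of the two.
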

\begin{proof}
Let $\epsilon > 0$.  Let $k\in \N$ such that $$\sum_{i=k}^\infty \frac{1}{2^i} < \frac{\epsilon}{2}.$$ Let $M'$ witness specification for $\frac{\epsilon}{2}$. Let $M = M'+k$. Let $x^1,x^2,\ldots, x^n \in \invlim F$.  Let $1=a_1\leq b_1 < a_2 \leq b_2 \leq\ldots\leq b_n$ with $a_{i+1}- b_{i} > M$. Note that $x^i = (x^i_1,x^i_2,\ldots)$, with $x^i_j\in F(x^i_{j+i})$.  Consider, for each $i$, the finite piece of the orbit $(x^i_j)_{j=a_i}^{b_i+k}$.\\
We construct a point in the inverse limit that that traces our orbits.


Let  $\alpha_1 = 0$, $ \beta_1 = b_n+k-a_n$, and for $2\leq i \leq n$, $\alpha_i = b_n-b_{n-(i-1)}$, and $\beta_i = b_n+k-a_{n-(i-1)}$.  Note that $\alpha_{i+1}-\beta_i > M'$. Let $y_1 = x^n_{b_n+k}$, $y_2 = x^{n-1}_{b_{n-1}+k}$,$\ldots$ such that $y_i = x^{n-(i-1)}_{b_n+k}$ for each $1\leq i \leq n$. So by the specification property, there exists a point $z'$ in $X$,  that has a periodic orbit of length $D>M+b_n-a_1$, denoted $(z'_j)_{j=0}^\infty$ such that
\[|(z'_j)-(x^{n-(i-1)}_{b_n+k-j})| < \frac{\epsilon}{2},\;\;\;\text{for}\;\;\; 1\leq i \leq n,\;\;\alpha_i\leq j \leq \beta_i.\]
Let $z$ be the point in $\invlim F$ whose first coordinate is $z'_{b_n+k}$, second coordinate is $z'_{b_n+k-1}$, in general whose $i$th coordinate is $z'_{b_n+k-(i-1)}$, where we trace out the known periodic orbit of $z'$ for negative indices.
 We show that $z$ witnesses specification for $x^1,\ldots, x^n$, and the $a_i$'s, $b_i$'s.
   To see this, let $a_i \leq a \leq b_i$.

\begin{align*}
d(\sigma^{a}(z), \sigma^a(x^i)) &= \sum_{j=1}^{k-1}\frac{|\pi_j(\sigma^a(z))-\pi_j(\sigma^a(x^i))|}{2^j}+\sum_{j=k}^\infty \frac{|\pi_j(\sigma^a(z))-\pi_j(\sigma^a(x^i))|}{2^j}\\
&\leq \sum_{j=1}^{k-1}\frac{|(z'_{b_n+k-a-j})-(x^i_{a+j})|}{2^j}+\sum_{j=k}^\infty\frac{1}{2^j}\\
&\leq \sum_{j=1}^{k-1}\frac{|(z'_{b_n+k-a-j})-(x^i_{a+j})|}{2^j}+\frac{\epsilon}{2}.
\end{align*}
Now, note that as $a_i \leq a \leq  b_i$, we have  \[b_n+k-b_i-(k-1)\leq b_n+k-a-j \leq b_n+k-a_i-1\]
\[\alpha_{n-(i-1)} \leq b_n-b_i+1 \leq b_n+k-a-i \leq b_n+k-a_i-1 \leq  \beta_{n-(i-1)}.\]
Then $|(z'_{b_2+k-a-j})-(x^i_{a+j})|< \frac{\epsilon}{2}$, for each $j$.  Thus our sum becomes
\[\leq \sum_1^{k-1}\frac{\frac{\epsilon}{2}}{2^j}+\frac{\epsilon}{2} < \epsilon.\]
As $\sigma^D(z) = z$, we have that $\invlim F$ has specification via $\sigma$.
\end{proof}

For the next theorem, we need to define the inverse of a set-valued map on $X$.
\begin{definition} Let $F\colon X\rightarrow 2^X$ be a set-valued map.  Then $F^{-1}\colon X \rightarrow 2^X$ is the set-valued map on $X$ such that $F^{-1}(x) = \{y\in X:x\in F(y)\}$.
\end{definition}
\begin{lemma} Let $F\colon X\rightarrow 2^X$. The complete orbit of $F$ and the inverse limit of $F^{-1}$ are equal as sets.
\end{lemma}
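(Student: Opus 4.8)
The plan is to prove the asserted set equality by a direct double inclusion, reducing both sides to a single membership condition on sequences in $X^\N$. First I would make precise what the left-hand object means as a set: the collection $CO(F)=\{CO(x):x\in X\}$ is literally a set of sets of sequences, so for ``equal as sets'' to be meaningful I would read ``the complete orbit of $F$'' as the union $\bigcup_{x\in X}CO(x)$, i.e. the set of all orbits of $F$. A sequence $(x_i)_{i=0}^\infty\in X^\N$ lies in this union if and only if it is an orbit of its own initial point $x_0$, which by the definition of orbit happens exactly when $x_{i+1}\in F(x_i)$ for every $i\ge 0$.

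Next I would translate this orbit condition through the definition of the inverse map. Since $F^{-1}(y)=\{w:y\in F(w)\}$, the statement $x_{i+1}\in F(x_i)$ is logically equivalent to $x_i\in F^{-1}(x_{i+1})$. Hence $(x_i)_{i=0}^\infty$ is an orbit of $F$ if and only if $x_i\in F^{-1}(x_{i+1})$ holds for all $i$, and this is precisely the bonding relation defining a point of $\invlim F^{-1}$. Reading the resulting chain of equivalences in both directions yields both inclusions simultaneously, so the two sets coincide; no topology, continuity, or compactness enters, and surjectivity of $F$ is not needed since we pool orbits over every starting point.

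The one place demanding genuine care, and the step I expect to be the main obstacle, is the bookkeeping of \emph{direction and indices}: one must verify that feeding $F^{-1}$ into the inverse-limit bonding relation reproduces the orbit relation for $F$ with indices aligned term-for-term, rather than a shifted or reversed variant. Concretely, this is the check that ``$x_i\in F^{-1}(x_{i+1})$ for all $i$'' is exactly the condition cutting out $\invlim F^{-1}$ and that it matches ``$x_{i+1}\in F(x_i)$ for all $i$'' coordinate by coordinate. This is where the passage from $F$ to $F^{-1}$ interacts with the chosen inverse-limit convention, and it is the only thing that could go wrong; once that equivalence is pinned down, the remainder is an immediate relabelling of the defining membership condition.
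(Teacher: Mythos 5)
Your argument is correct, and it is essentially the only proof there is: the paper states this lemma with no proof at all, treating it as an immediate relabelling, which is exactly what you supply. Your reading of ``the complete orbit of $F$'' as the union $\bigcup_{x\in X}CO(x)$ is also the right repair of the paper's $CO(F)$, which is formally typed as a set of sets of sequences.

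The caveat lands precisely on the step you flagged, and your suspicion was justified: the identification ``$x_i\in F^{-1}(x_{i+1})$ for all $i$ is the bonding condition cutting out $\invlim F^{-1}$'' is \emph{false} against the paper's displayed definition. The Introduction defines $\invlim F=\{(x_0,x_1,\dots)\in X^\N: x_{i+1}\in F(x_i)\}$; substituting $F^{-1}$ gives the condition $x_{i+1}\in F^{-1}(x_i)$, i.e.\ $x_i\in F(x_{i+1})$, the reversal of yours. Under that literal convention the lemma itself fails: the set of all orbits of $F$ would coincide with $\invlim F$ itself, while $\invlim F^{-1}$ would be the set of \emph{reversed} orbits --- e.g.\ for $X=\{a,b\}$ with $F(a)=F(b)=\{b\}$, the orbit $(a,b,b,\dots)$ lies outside $\invlim F^{-1}=\{(b,b,b,\dots)\}$. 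However, the convention the body of the paper actually uses is the opposite one, $x_i\in F(x_{i+1})$: this is what makes the point $z$ constructed in the specification-on-$\invlim F$ theorem (whose coordinates run \emph{backwards} along an $F$-orbit of $z'$) lie in $\invlim F$, and what makes $\gamma((x_0,x_1,\dots))=\{(x_{-1},x_0,x_1,\dots):x_{-1}\in F(x_0)\}$ map into $\invlim F$. With that convention, your chain of equivalences ($x_{i+1}\in F(x_i)\iff x_i\in F^{-1}(x_{i+1})$, matched term-for-term to the bonding relation) is exactly right. So your proof stands, provided you state explicitly that the inverse limit is cut out by $x_i\in G(x_{i+1})$ for a set-valued map $G$; checked against the Introduction's formula instead, your final identification --- and the lemma being proved --- would each be off by a reversal, a discrepancy that is the paper's, not yours.
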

%
This tells us that chaotic properties of $F$ will be reflected in the structure of the inverse limit of $F^{-1}$, and vice-versa.  The following theorem connects the ideas of the specification property and inverse limits.
\begin{theorem} Let $F\colon X\rightarrow 2^X$ be a set-valued map.
 $F$ has specification if, and only if, $F^{-1}$ does as well.
\end{theorem}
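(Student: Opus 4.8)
The plan is to prove only the single implication ``if $F$ has specification then $F^{-1}$ does,'' and then obtain the full equivalence for free. A direct computation from the definition of the inverse gives $(F^{-1})^{-1} = F$: for any $x$, $(F^{-1})^{-1}(x) = \{y : x \in F^{-1}(y)\} = \{y : y \in F(x)\} = F(x)$. Hence, once the implication is established for an arbitrary set-valued map, applying it to $G = F^{-1}$ yields ``$F^{-1}$ has specification $\Rightarrow$ $F$ has specification,'' so it suffices to prove one direction.

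The engine of the proof is time reversal. First I would record the basic correspondence: if $(u_0, u_1, \ldots, u_r)$ is a finite $F^{-1}$-orbit segment, then the reversed tuple $(u_r, u_{r-1}, \ldots, u_0)$ is a finite $F$-orbit segment, since $u_{j+1} \in F^{-1}(u_j)$ is literally the statement $u_j \in F(u_{j+1})$. The same reversal carries bi-infinite periodic $F$-orbits to bi-infinite periodic $F^{-1}$-orbits of the same period. Now suppose $F$ has specification and fix $\epsilon > 0$; let $M'$ witness specification for $F$ at $\epsilon$, and take $M = M'$ for $F^{-1}$. Given data $x^1,\ldots,x^n$, windows $a_1 \le b_1 < \cdots < a_n \le b_n$ with $a_{i+1} - b_i > M$, orbits $(x^i_j)_{j=0}^\infty$ of $F^{-1}$, and $P > M + b_n - a_1$, I reflect the whole configuration through $b_n$: set $\tilde a_i = b_n - b_{n+1-i}$ and $\tilde b_i = b_n - a_{n+1-i}$. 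A short computation shows $\tilde a_1 = 0 \le \tilde b_1 < \cdots \le \tilde b_n = b_n - a_1$, that each gap $\tilde a_{i+1} - \tilde b_i = a_{n-i+1} - b_{n-i} > M = M'$ is preserved, and that the span $\tilde b_n - \tilde a_1 = b_n - a_1$ is unchanged, so the same $P$ stays admissible.

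On the window $[\tilde a_i, \tilde b_i]$ I prescribe the reversed segment $j \mapsto x^{n+1-i}_{b_n - j}$, which by the correspondence above is a genuine $F$-orbit segment. To feed these into the specification property for $F$ I must extend each to a full orbit $(\tilde x^i_j)_{j=0}^\infty$: I extend forward past $\tilde b_i$ using that $F(x)$ is always nonempty, and backward from $\tilde a_i$ down to $0$ using that $F$ is surjective (which holds because $F$ has specification, per the remark following its definition). Applying specification for $F$ then produces a point with an $F$-orbit $(\zeta_j)_{j=0}^\infty$, periodic of period $P$, satisfying $d(\zeta_j, x^{i}_{b_n - j}) < \epsilon$ for $b_n - b_i \le j \le b_n - a_i$. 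Finally I reverse back: extend $(\zeta_j)$ periodically to all integers and set $z_\ell = \zeta_{b_n - \ell}$. Reversal turns this periodic $F$-orbit into a periodic $F^{-1}$-orbit of period $P$ (so $z_P = z_0$), and the substitution $j = b_n - \ell$ converts the estimate above into exactly $d(z_\ell, x^i_\ell) < \epsilon$ for $a_i \le \ell \le b_i$, which is the required conclusion for $F^{-1}$.

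I expect the main obstacle to be bookkeeping rather than conceptual: keeping the two reflections consistent, namely the window reversal $i \leftrightarrow n+1-i$ and the time reflection $j \leftrightarrow b_n - j$, and verifying that the gap condition, the starting index $\tilde a_1 = 0$, and the period $P$ all survive the reflection. The one genuine hypothesis-checking step is the extension of the reversed finite segments to full one-sided orbits, where nonemptiness of the values $F(x)$ and surjectivity of $F$ are precisely what is needed.
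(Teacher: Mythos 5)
Your proof is correct and takes essentially the same route as the paper: reverse the $F^{-1}$-orbit segments in time, reflect the windows through $b_n$ (your $\tilde a_i,\tilde b_i$ are precisely the $\alpha_i,\beta_i$ bookkeeping from the paper's preceding inverse-limit theorem, to whose proof the paper defers with ``the result follows from the proof of the previous theorem''), apply specification for $F$, and reverse the resulting periodic orbit back. If anything your write-up is more complete than the paper's sketch, since you make explicit the extension of the reversed finite segments to full orbits via nonemptiness and surjectivity of $F$, and you replace the paper's ``the converse follows similarly'' with the clean involution $(F^{-1})^{-1}=F$.
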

\begin{proof}
Suppose $F$ has specification.  Let $\epsilon >0$.  Let $M$ witness specification for $F$ and this $\epsilon$. Let $x^1,\ldots, x^n\in X$.  Let $a_1\leq b_i < \ldots \leq b_n$ with $a_{i+1}- b_{i} > M$.  Let $P > M+b_n-a_1$.  Let $(x^i_j)_{j=0}^\infty$ be an orbit of $x^i$ via $F^{-1}$, for each $1\leq i \leq n$.  Now consider the orbit segments $(x^i_j)_{j=a_i}^{b_i}$.  We have that $x^i_{j+1}\in F^{-1}(x^i_{j})$, and so $x^i_j\in F(x^i_{j+1}) $.  Let \[(y^i_j)_{j=a_i}^{b_i}=(x^{n-(i-1)}_j)_{j=b_{n-(i-1)}}^{a_{n-(i-1)}}.\]
By specification for $F$, there is some $z$ with an orbit $(z_j)_{j=0}^\infty$ such that
\[d(z_j, y^i_j)< \epsilon,\;\;\;\; 1\leq i \leq n,\;\;a_i \leq j \leq b_i.\]  The result follows from the proof of the previous theorem.  The converse follows similarly.
\end{proof}
This gives a way to determine a class of relations that have the specification property.  To be more precise, the inverses of continuous single-valued functions that have specification will have specification. As an example, the inverse of the tent map will have specification, see Figure 1.

\begin{figure}[h]
\begin{center}
\begin{tikzpicture}
\draw[-] (0,0) -- (0,0) node[left] {$(0,0)$};
\draw[-] (0,0) -- (5,0) node[right] {$x$} node[below] {$(0,1)$};
\draw[-] (0,0) -- (0,5) node[above] {$y$} node[left] {$(1,0)$} ;
\draw[-] (0,5) -- (5,5) ;
\draw[-] (5,0) -- (5,5) ;
\draw[-](0,0) -- (5,2.5) node[right] {$(1,\frac{1}{2})$};
\draw[-](0,5) -- (5,2.5);
\end{tikzpicture}
\caption{}
\end{center}
\end{figure}

\noindent  This implies that $\sigma$ on the inverse limit of a map of the interval with the specification also has the specification property.  Hence such $\sigma$'s will also have non-atomic invariant measures of full support.  The existence of such measures was the focus of \cite{kennedyrainesstockman}.

Earlier, we defined the forgetful shift map $\sigma\colon  \invlim F \rightarrow \invlim F$, which is a single-valued continuous function.  There is also an induced set-valued mapping on the inverse limit which we call $\gamma$.
\begin{definition}
Let $F\colon X\rightarrow 2^X$. Let $\gamma\colon  \invlim F \rightarrow 2^{\invlim F}$ be a set-valued map on $\invlim F$ such that $\gamma((x_0,x_1,\ldots)) = \{(x_{-1},x_0,x_1,\ldots): x_{-1} \in F(x_0)\}$.
\end{definition}
\begin{corollary}
Let $F$ be a set-valued map on $X$, a compact metric that has specification.  Then $\gamma$ has specification.
\end{corollary}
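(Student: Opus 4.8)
The plan is to recognize $\gamma$ as nothing more than the set-valued inverse of the forgetful shift $\sigma$ on $\invlim F$, and then simply chain the two theorems already proved. First I would record that $\invlim F$ is itself a compact metric space: it is a closed subset of the Tychonoff product $X^\N$, and a countable product of copies of a compact metric space is compact and metrizable, so the closed subset $\invlim F$ is as well. This is what lets me apply both of the preceding theorems with the base space $X$ replaced by $\invlim F$ and the map replaced by $\sigma$.

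Next I would verify the identity $\gamma = \sigma^{-1}$ straight from the definitions. Given $(x_0,x_1,\ldots)\in\invlim F$, a point $(y_0,y_1,\ldots)$ lies in $\sigma^{-1}((x_0,x_1,\ldots))$ exactly when $\sigma((y_0,y_1,\ldots)) = (y_1,y_2,\ldots) = (x_0,x_1,\ldots)$, which forces $y_{j+1}=x_j$ for all $j\ge 0$, while the remaining coordinate $y_0$ is constrained only by membership in $\invlim F$, namely $y_1 = x_0\in F(y_0)$, i.e.\ $y_0\in F^{-1}(x_0)$. Relabeling $y_0$ as $x_{-1}$, this set is precisely $\{(x_{-1},x_0,x_1,\ldots): x_{-1}\in F(x_0)\} = \gamma((x_0,x_1,\ldots))$. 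Hence $\gamma$ and $\sigma^{-1}$ coincide as set-valued maps on $\invlim F$, where $\sigma$ is regarded as a (singleton-valued) set-valued map.

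With these two observations the corollary is immediate. Since $F$ has specification, the theorem that $\invlim F$ has specification via $\sigma$ (Theorem 7) shows that the single-valued map $\sigma$ has specification on the compact metric space $\invlim F$. Applying the theorem that $F$ has specification if and only if $F^{-1}$ does (Theorem 9) to the system $(\invlim F,\sigma)$ then yields that $\sigma^{-1}$ has specification, and since $\sigma^{-1}=\gamma$, the map $\gamma$ has specification. The entire weight of the argument rests on those two cited theorems; the only genuine verifications are the identification $\gamma=\sigma^{-1}$ and the remark that $\invlim F$ is compact metric so the hypotheses apply, and I expect the former to be the main (though routine) point to get right.
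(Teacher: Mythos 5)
Your proposal is correct and is essentially the paper's own argument: the paper likewise views $\sigma$ as a singleton-valued set-valued map, observes that $\gamma$ and $\sigma$ are inverses, and concludes via the theorem that specification passes to inverses (having already established that $\sigma$ inherits specification from $F$). You simply spell out the two details the paper leaves implicit --- the coordinate verification that $\gamma=\sigma^{-1}$ and the compact metrizability of $\invlim F$ --- which is a harmless (indeed welcome) elaboration, not a different route.
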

\begin{proof}
Identify with the shift map $\sigma$ a set-valued map $\sigma\colon \invlim F \rightarrow 2^{\invlim F}$ such that $\sigma((x_0,x_1,\ldots)) = \{(x_1,x_2,\ldots)\}$.  Then $\sigma$ and $\gamma$ are inverses as defined above, and so the result holds by Theorem 9.
\end{proof}
\begin{lemma}
Let $F\colon X \rightarrow 2^X$ be a set-valued map. If $F$ has mixing, then $F^{-1}$ has mixing.
\end{lemma}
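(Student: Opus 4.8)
The key observation is a duality between orbits of $F$ and orbits of $F^{-1}$ under reversal. If $(x_0,x_1,\ldots,x_m)$ is a finite orbit segment of $F^{-1}$, so that $x_{j+1}\in F^{-1}(x_j)$ and hence $x_j\in F(x_{j+1})$ for each $j$, then reading it backwards, $(x_m,x_{m-1},\ldots,x_0)$ is a finite orbit segment of $F$; the converse is identical. Thus an orbit of $F$ running from a set $V$ to a set $U$ in $m$ steps becomes, after reversal, an orbit of $F^{-1}$ running from $U$ to $V$ in $m$ steps. This reversal is essentially the whole content of the lemma, modulo the bookkeeping needed to turn finite segments into the infinite orbits demanded by the definition of mixing.

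The plan is as follows. Fix nonempty open sets $U$ and $V$. Apply the mixing of $F$ to the pair $V,U$ taken in that order, obtaining $M\in\N$ so that for every $m>M$ there is $w\in V$ with an $F$-orbit $(w_j)_{j=0}^\infty$ and $w_m\in U$. For each such $m$, set $x_j=w_{m-j}$ for $0\le j\le m$. By the reversal observation this is a valid $F^{-1}$-orbit segment with $x_0=w_m\in U$ and $x_m=w_0\in V$, which is exactly what mixing of $F^{-1}$ requires at index $m$. The same $M$ works for all $m>M$, yielding mixing of $F^{-1}$.

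The step requiring care, and the main obstacle, is that the definition of mixing asks for a full one-sided orbit $(x_j)_{j=0}^\infty$, whereas reversal only produces the finite segment $x_0,\ldots,x_m$. To extend it we must be able to choose $x_{m+1}\in F^{-1}(x_m)$, then $x_{m+2}\in F^{-1}(x_{m+1})$, and so on, which is possible precisely when $F^{-1}(p)\nempty$ for every $p$, i.e. when $F$ is surjective onto $X$. I would deduce surjectivity from mixing itself: applying mixing with $U=X$ and $V$ an arbitrary nonempty open set produces, for large $m$, a point $x_m\in V$ with $x_m\in F(x_{m-1})$, so $x_m$ lies in the image of $F$; hence the image of $F$ is dense. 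Since $F$ is upper semi-continuous with closed values on the compact space $X$, its graph is closed and therefore compact, so its image (a continuous projection of a compact set) is closed. A dense closed subset of $X$ must be all of $X$, so $F$ is surjective, each $F^{-1}(p)$ is a nonempty closed set, and the desired extension of the reversed segment to an infinite $F^{-1}$-orbit exists. With surjectivity secured, the reversal argument above completes the proof.
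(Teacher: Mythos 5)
Your proof is correct and takes essentially the same route as the paper's: apply mixing of $F$ to the pair $(V,U)$ and reverse the resulting orbit segment, so that the point $x_m$ of an $F$-orbit starting in $V$ and landing in $U$ becomes a point of $U$ with an $F^{-1}$-orbit whose $m$th term lies in $V$. You are in fact more careful than the paper, which silently assumes the reversed finite segment extends to the infinite orbit $(x_j)_{j=0}^\infty$ that the definition of mixing demands; your derivation of surjectivity of $F$ from mixing (dense image, plus closedness of the image via the compact graph of an upper semi-continuous map with closed values) fills that gap cleanly.
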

\begin{proof}
Let $U$ and $V$ be non-empty open sets, we wish to show that there exists $M$ such that for $m > M$, there exists a point in $U$ that has an orbit $(x_j)_{j=0}^\infty$ under $F^{-1}$ such that $x_m \in V$.  As $F$ has mixing, let $N\in \N$ such that for all $n > N$, there exists $y^n\in V$ such that $y^n$ has an orbit $(y^n_j)_{j=0}^\infty$ under $F$ with $y^n_n\in U$.  Then letting $y^n_n$ be our choice for a point in $U$, we see that under $F^{-1}$, this point has an orbit whose $n$th iterate lands in $V$.
\end{proof}
\begin{corollary}
If $F$ is the inverse, as defined above, of a continuous single-valued function mapping the interval to itself, then $F$ has mixing iff $F$ has specification.
\end{corollary}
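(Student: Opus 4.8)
The plan is to reduce the claim to the classical one-dimensional fact that a topologically mixing continuous interval map has the specification property, and then transport that property back and forth using the inverse-duality results already established. Write $f\colon [0,1]\to[0,1]$ for the continuous single-valued map whose inverse is $F$, so that $F(y)=f^{-1}(y)=\{z: f(z)=y\}$. The first thing I would record is that inverting twice returns $f$: for any $x$, $F^{-1}(x)=\{y: x\in F(y)\}=\{y: f(x)=y\}=\{f(x)\}$, so $F^{-1}=f$ as a (singleton-valued) set-valued map. When a set-valued map is singleton-valued its orbits are unique, and the definitions of topological mixing and of the specification property given in the Preliminaries collapse to the usual single-valued notions; this identification is what lets me feed $f$ into the interval-dynamics literature.

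The forward implication is immediate: if $F$ has specification, then Theorem 3 gives that $F$ is topologically mixing, and no use of the interval hypothesis is needed. For the reverse implication I would argue in three steps. First, assume $F$ has mixing; by Lemma 10 its inverse $F^{-1}=f$ also has mixing. Second, since $f$ is a continuous mixing map of the interval, I invoke the classical theorem of one-dimensional dynamics (due to Blokh) that topologically mixing interval maps have the specification property. Third, $f=F^{-1}$ now has specification, so by Theorem 9 (which asserts that $F$ has specification if and only if $F^{-1}$ does) I conclude that $F$ has specification. Combining the two directions yields the stated equivalence.

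The only genuinely nontrivial input, and hence the main obstacle, is the external interval-dynamics result used in the second step; everything else is bookkeeping through the duality in Lemma 10 and Theorem 9. The point requiring care is verifying that the paper's set-valued definitions of mixing and specification really do reduce to the ordinary single-valued definitions when applied to $f$, since the classical theorem is phrased in the ordinary language. Once that reduction is confirmed, the chain $F \text{ mixing} \Rightarrow f \text{ mixing} \Rightarrow f \text{ has specification} \Rightarrow F \text{ has specification}$ closes the proof.
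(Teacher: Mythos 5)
Your proposal is correct and follows essentially the same route as the paper: the paper's proof likewise just combines Theorem 9 (specification duality for $F$ and $F^{-1}$), Lemma 10 (mixing passes to the inverse), and Blokh's theorem that mixing and specification are equivalent for continuous interval maps. Your extra observations --- that $F^{-1}=f$ as a singleton-valued map, that the set-valued definitions collapse to the classical ones there, and that the easy direction can be handled by Theorem 3 alone --- are just worthwhile bookkeeping the paper leaves implicit.
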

\begin{proof}
It is sufficient to note Theorem 9, Lemma 10, and the exceptional theorem from Blokh \cite{Blokh}, which states that mixing and the specification property are equivalent on single-valued interval maps.
\end{proof}

\section{Measures on a set-valued map.}
The specification property yields many good results in the measure spaces of single-valued functions, most notably the fact that in the space of invariant measures, there is a dense $G_\delta$ set of non-atomic measures with full support \cite{Denker}. We give some preliminary results towards the existence of an invariant measure with full support which is non-atomic.
%

To begin adapting this to the set-valued case, we introduce the notion of an invariant measure on a set-valued map.  Aubin, Frankowska and Lasota \cite{aubin} gave the following notion of an invariant measure, which Akin and Miller \cite{AkinMiller} showed to be equivalent to many other notions of an invariant measure.

\begin{definition}
Let $F\colon X\rightarrow 2^X$ be a set-valued map.  Let $P(X)$ be the space of Borel probability measures on $X$.  Then a measure $\mu\in P(X)$ is said to be \emph{invariant} if
\[\mu(B) \leq \mu(F^{-1}(B)),\]
for all Borel sets $B$ of $X$.
\end{definition}

To see a trivial example of an $F$-invariant measure, suppose $F\colon X\rightarrow 2^X$ has a point $x$ with a periodic orbit $(x_j)_{j=0}^\infty$, with period $n$.  Then define the periodic measure $\delta_x$ by
\[\delta_x(B) =\frac{|\{j\in \{0,1,\ldots, n-1\}: x_j \in B\}|}{n},\]   for any Borel set $B$. To see that $\delta_x$ is invariant, let $B$ be a Borel set in $X$, and suppose that there are $k$ many distinct elements of $\{x_j : j\in \{0,1,\ldots, n-1\}\}$ in $B$.  Then $F^{-1}(B)$ will have at least $k$ many distinct elements of $\{x_j : j\in \{0,1,\ldots, n-1\}\}$, and so the measure of $F^{-1}(B)$ will be at least the measure of $B$.

\begin{theorem}
Let $F\colon X\rightarrow 2^X$ be a set-valued map. The set of $F$-invariant measures with support $X$ is either empty or a dense $G_\delta$ set in the space of $F$-invariant measures on $X$.
\end{theorem}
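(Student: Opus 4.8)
The plan is to mirror the classical dichotomy for single-valued systems (Parthasarathy, Sigmund), adapting each step to the inequality form of invariance $\mu(B)\le\mu(F^{-1}(B))$ used here. Write $M_F\subseteq P(X)$ for the set of $F$-invariant measures, equipped with the weak$^*$ topology, which is compact and metrizable since $X$ is compact metric. The first thing I would record is that $M_F$ is \emph{convex}: if $\mu_1,\mu_2\in M_F$ and $t\in[0,1]$, then for every Borel $B$,
\[
(t\mu_1+(1-t)\mu_2)(B)=t\mu_1(B)+(1-t)\mu_2(B)\le t\mu_1(F^{-1}(B))+(1-t)\mu_2(F^{-1}(B))=(t\mu_1+(1-t)\mu_2)(F^{-1}(B)).
\]
So convex combinations of invariant measures are invariant; this works precisely because invariance is a \emph{linear} inequality in $\mu$.

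Next I would establish the $G_\delta$ structure. Since $X$ is compact metric it is second countable, so fix a countable base $\{U_k\}_{k\in\N}$ consisting of nonempty open sets. A measure $\mu$ has support equal to $X$ if and only if $\mu(U)>0$ for every nonempty open $U$, and since every such $U$ contains some basic $U_k$, this is equivalent to $\mu(U_k)>0$ for all $k$. For each $k$ set $G_k=\{\mu\in M_F:\mu(U_k)>0\}$. By the portmanteau theorem the map $\mu\mapsto\mu(U_k)$ is lower semi-continuous on $P(X)$ for the open set $U_k$, so $G_k$ is open in $M_F$. Therefore the collection of full-support invariant measures is $\bigcap_{k}G_k$, a $G_\delta$ set in $M_F$.

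It remains to prove the dichotomy: if this set is nonempty, it is dense. Suppose $\mu_0\in M_F$ has full support, and let $\nu\in M_F$ be arbitrary. For $t\in[0,1)$ put $\mu_t=t\nu+(1-t)\mu_0$. By convexity $\mu_t\in M_F$, and for every nonempty open $U$ we have $\mu_t(U)\ge(1-t)\mu_0(U)>0$, so $\mu_t$ has full support. Finally $\mu_t\to\nu$ weak$^*$ as $t\to1$, since $\int g\,d\mu_t=t\int g\,d\nu+(1-t)\int g\,d\mu_0\to\int g\,d\nu$ for every continuous $g$. Hence every invariant measure is a weak$^*$ limit of full-support invariant measures, and the full-support invariant measures are dense in $M_F$.

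The argument is essentially routine once the topology is fixed; the only points requiring care are (i) the lower semi-continuity of $\mu\mapsto\mu(U_k)$, which is exactly the open-set half of the portmanteau theorem, and (ii) the convexity of $M_F$, which hinges on invariance being an inequality that is linear in $\mu$. The expected main obstacle is simply confirming that the inequality form of the invariance condition is genuinely preserved under convex combinations (rather than only the equality form familiar from the single-valued case); since the density argument uses only convexity, no closedness of $M_F$ is actually needed for this statement.
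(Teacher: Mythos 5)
Your proposal is correct, and it reaches the dichotomy by a genuinely different route from the paper's. Both arguments pivot on the same convex-combination trick --- interpolating an arbitrary invariant $\nu$ with a full-support invariant $\mu_0$, which works precisely because the invariance condition $\mu(B)\le\mu(F^{-1}(B))$ is linear in $\mu$ --- but they package it differently. The paper runs a Baire category argument: for a countable base $\{U_i\}$ it shows each $D(U_i)=\{\mu : \mu \text{ is } F\text{-invariant},\ \mu(U_i)=0\}$ is closed and nowhere dense (the convex combination $(1-\epsilon)\nu+\epsilon\mu_0$ supplying the nowhere-density), and concludes that the complement of $\bigcup_i D(U_i)$ is a dense $G_\delta$ because a first category set in a Baire space has dense complement. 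You instead prove the two halves directly: the $G_\delta$ structure from the open-set half of the portmanteau theorem (lower semicontinuity of $\mu\mapsto\mu(U_k)$ makes each $G_k=\{\mu\in M_F:\mu(U_k)>0\}$ open in $M_F$), and density by observing that $\mu_t=t\nu+(1-t)\mu_0$ is invariant by convexity, has full support for $t<1$, and converges weak$^*$ to $\nu$ as $t\to 1$. Your route buys something real: the paper's ``first category implies dense complement'' step silently requires the space of $F$-invariant measures to be a Baire space --- for instance, closed in the compact metric space $P(X)$, a fact the paper never verifies for the inequality definition of invariance --- whereas your argument, as you correctly note, needs neither closedness of $M_F$ nor the Baire category theorem, only convexity and lower semicontinuity. (The paper's formulation, in exchange, delivers the residuality statement in the form familiar from the classical single-valued setting.) One caveat you share with the paper: both treat $\mu(F^{-1}(B))$ as well-defined for every Borel $B$, i.e., both take Borel measurability of $F^{-1}(B)$ for granted; that is inherited from the paper's definition of invariance and is not a defect specific to your write-up.
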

\begin{proof}
Let $\mu$ be a $F$-invariant measure with support $X$.  Let $U$ be an open nonempty subset of $X$, and so $\mu(U) > 0$.  Denote $D(U) = \{ \mu | \mu\text{ is $F$-invariant}, \mu(U) = 0\}$.  Then $D(U)$ is a closed collection of measures.  Also, $D(U)$ has no interior.  To see this, let $\nu \in D(U)$, $\epsilon > 0$ and consider the measure $\nu_\epsilon =(1-\epsilon)\nu + \epsilon\mu$.  $\nu_\epsilon(U) = \epsilon\mu(U) > 0$, and so $\nu_\epsilon$ is not in $D(U)$.  Letting $\epsilon$ go to zero, $\nu_\epsilon \rightarrow \nu$, and so for each point of $D(U)$, there is a sequence of measures outside of $D(U)$ approaching that point.  Thus $D(U)$ is nowhere dense.\\
Now as $X$ is a compact metric, it is second countable, so let $\mathcal{U} =  \{U_1,U_2,\ldots\}$ be a countable basis for $X$.  Then $\displaystyle{\bigcup_{i=1}^\infty D(U_i)}$, being a countable collection of nowhere dense sets, is of first category.  Thus the complement of this union is a dense $G_\delta$ set.  As any measure in the complement of this union must have full support, the proof is complete.
\end{proof}

We now construct an invariant measure with full support, which gives us the following theorem.

\begin{theorem}
Let $F\colon X\rightarrow 2^X$ be a set-valued map. If $X$ has a dense set of points with periodic orbits (note that $F$ having specification gives this), then a dense $G_\delta$ set of invariant measures have full support.
\end{theorem}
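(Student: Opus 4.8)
The plan is to lean on the previous theorem, which states that the full-support $F$-invariant measures form either the empty set or a dense $G_\delta$ inside the space of $F$-invariant measures. Thus the only thing left to establish is that this set is \emph{nonempty}: it suffices to exhibit a single $F$-invariant measure whose support is all of $X$, after which the previous theorem immediately upgrades this to the desired dense $G_\delta$. The entire argument therefore reduces to building one such measure out of the hypothesized dense set of points carrying periodic orbits.

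First I would extract, using the second countability of the compact metric space $X$ together with the density hypothesis, a countable set $\{x^1,x^2,\ldots\}$ that is dense in $X$ and in which each $x^k$ admits a periodic orbit. For each $k$ I would fix one such orbit and form the associated periodic measure $\delta_{x^k}$ introduced earlier in this section, which we have already shown to be $F$-invariant. I would then set
\[
\mu \;=\; \sum_{k=1}^\infty \frac{1}{2^k}\,\delta_{x^k}.
\]
Since the weights are positive and sum to $1$ and each $\delta_{x^k}$ is a Borel probability measure, $\mu$ is again a Borel probability measure on $X$.

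Next I would verify the two required properties. For invariance, fix a Borel set $B$; each summand satisfies $\delta_{x^k}(B)\le \delta_{x^k}(F^{-1}(B))$, and because all terms are nonnegative we may multiply by $2^{-k}$ and sum to obtain $\mu(B)\le \mu(F^{-1}(B))$, so $\mu$ is $F$-invariant. For full support, let $U$ be any nonempty open set; density yields some $x^k\in U$, and since $x^k$ is the initial point of its own periodic orbit we have $\delta_{x^k}(U)>0$, whence $\mu(U)\ge 2^{-k}\delta_{x^k}(U)>0$. As every nonempty open set receives positive $\mu$-mass, the support of $\mu$ equals $X$.

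With one full-support invariant measure in hand the set of such measures is nonempty, and the previous theorem promotes it to a dense $G_\delta$, completing the proof. The construction is elementary, and the only step needing a moment's care is the invariance of the infinite convex combination; but since invariance is encoded as an inequality between the values of two measures on a common Borel set, and all the periodic measures are nonnegative, passing the inequality through the countable sum is unproblematic. I would therefore expect no genuine obstacle here beyond this bookkeeping.
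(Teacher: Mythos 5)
Your proposal is correct and follows essentially the same route as the paper: construct $\mu=\sum_{k=1}^\infty 2^{-k}\delta_{x^k}$ over a countable dense set of points with periodic orbits and invoke the preceding theorem's dichotomy to upgrade one full-support invariant measure to a dense $G_\delta$. If anything your write-up is slightly tidier than the paper's, since you reuse the already-established invariance of each periodic measure $\delta_{x^k}$ instead of recounting orbit points in $B$ versus $F^{-1}(B)$, and you explicitly verify that $\mu$ has full support, a step the paper's proof leaves implicit.
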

\begin{proof}
It suffices to build an invariant measure with full support.  Let $\{x^i:i\in\N\}$ be a dense set of points with periodic orbits $(x^i_j)_{j=0}^\infty$, and $x^i_{j_i} = x_0$.  Consider the measure
\[\mu = \sum_{i=1}^\infty \frac{1}{2^i}\delta_{x^i},\]
with $\delta_x$ as defined above. To see that $\mu$  is a measure, let $\{U_n\}_{n=1}^\infty$ be a countable collection of disjoint measurable sets. For each $n\in \N$, let
\[k^i_n = |\{j\in \{0,1,\ldots, j_i-1\}: x^i_j\in U_n\}|. \]
Then we have that
\[\mu(U_n) = \sum_{i=1}^\infty \frac{1}{2^i}\delta_{x^i}(U_n) = \sum_{i=1}^\infty \frac{1}{2^i}\frac{k^i_n}{j_i},\text{ and so } \sum_{n=1}^\infty \mu(U_n) = \sum_{n=1}^\infty \sum_{i=1}^\infty\frac{1}{2^i}\frac{k^i_n}{j_i}.\]

Now we consider $\mu(\cup_{n=1}^\infty U_n)$.
\begin{align*}
\mu(\cup_{n=1}^\infty U_n) &= \sum_{i=1}^\infty \frac{1}{2^i}\delta_{x^i}(\cup_{n=1}^\infty U_n)\\
&=\sum_{i=1}^\infty \frac{1}{2^i}  \frac{|\{j\in \{0,1,\ldots, j_i-1\}: x^i_j \in \cup_{n=1}^\infty U_n\}|}{j_i}\\
&=\sum_{i=1}^\infty \frac{1}{2^i}  \frac{\sum_{n=1}^\infty|\{j\in \{0,1,\ldots, j_i-1\}: x^i_j \in  U_n\}|}{j_i}\\
&=\sum_{i=1}^\infty \sum_{n=1}^\infty \frac{1}{2^i}  \frac{k^i_n}{j_i.}\\
&=\sum_{n=1}^\infty \sum_{i=1}^\infty \frac{1}{2^i}  \frac{k^i_n}{j_i.}\\
&=\sum_{n=1}^\infty \mu(U_n).
\end{align*}
We now show that $\mu$ is invariant.  Let $B$ be a borel set, let $m_i = |\{j\in \{0,1,\ldots, j_i-1\} : x^i_j \in B\}|$, and let $n_i = |\{j\in \{0,1,\ldots, j_{i}-1\} : x^i_j \in F^{-1}(B)\}|$. Then
\[\mu(B) = \sum_{i=1}^\infty \frac{m_i}{2^i j_i}.\]
\[\mu(F^{-1}(B)) = \sum_{i=1}^\infty \frac{n_i}{2^i j_i}.\]
For $\mu$ to be invariant, it suffices to show that $n_i \geq m_i$.  To see this, let $j\in \{0,\ldots, j_i-1\}$ such that $x^i_j \in B$.  Then as $x^i_{j-1} \in F^{-1}(B)$, we see that $n_i \geq m_i$.
\end{proof}

 \bibliography{mybib}
\bibliographystyle{plain}
\end{document}